\newtheorem{theorem}{Theorem}
\newtheorem{lemma}[theorem]{Lemma}
\begin{document}
\title{3-manifolds with nilpotent embeddings in $S^4$}
 
\author{J.A.Hillman}
\address{School of Mathematics and Statistics\\
     University of Sydney, NSW 2006\\
      Australia }

\email{jonathan.hillman@sydney.edu.au}

\begin{abstract}
We consider embeddings of 3-manifolds $M$ in $S^4$ such that the two
complementary regions $X$ and $Y$ each have nilpotent fundamental group.
If $\beta=\beta_1(M)$ is odd then these groups are abelian and $\beta\leq3$.
In general $\pi_1(X)$ and $\pi_1(Y)$ have 3-generator presentations,
and $\beta\leq6$.
We give two examples illustrating our results.
\end{abstract}

\keywords{embedding, homologically balanced, nilpotent, 3-manifold, 
restrained, surgery}

\subjclass{57N13}

\maketitle

This is a continuation of the papers \cite{Hi17,Hi19},
in which we considered the complementary regions 
of a closed hypersurface in $S^4$.
Let $M$ be a closed orientable 3-manifold and
$j:M\to {S^4}=X\cup_MY$ be a locally flat embedding.
Then $\chi(X)+\chi(Y)=2$ and we may assume that $\chi(X)\leq\chi(Y)$.
In \cite[\S7]{Hi17} we considered the possibilities for $\chi(X)$, 
$\pi_X=\pi_1(X)$ and $\pi_Y=\pi_1(Y)$, and showed that if $\pi_X$ is abelian 
then $\beta=\beta_1(M;\mathbb{Q})\leq4$ or $\beta=6$,
while in \cite{Hi19} we attempted to apply 4-dimensional surgery 
to classify embeddings such that both $\pi_X$ and $\pi_Y$
are abelian.
Here we shall cast our net a little wider.
In order to use 4-dimensional surgery arguments we must restrict 
the possible groups $\pi_X$ and $\pi_Y$.
Under our present understanding of the Disc Embedding Theorem,
these groups should be in the class $G$ of groups generated from groups 
with subexponential growth by increasing unions and extensions
\cite{FT95}.
This class includes all elementary amenable groups and is included in the class of {\it restrained\/} groups, those which have no non-cyclic free subgroups.
We shall also assume that the embedding $j$ is {\it bi-epic}, i.e.,
that each of the homomorphisms $j_{X*}:\pi=\pi_1(M)\to\pi_X$ and
$j_{Y*}:\pi\to\pi_Y$ is an epimorphism.
This is so if $\pi_X$ and $\pi_Y$ are nilpotent,
since $H_1(j_X)$ and $H_1(j_Y)$ are always epimorphisms.

We show firstly that if $j$ is bi-epic and $\pi_X$ and $\pi_Y$ 
are restrained then $0\leq\chi(X)\leq\chi(Y)$, 
so $\chi(X)$ and $\chi(Y)$ are determined by $\beta$,
and if $\beta$ is even then $\chi(X)=\chi(Y)=1$ and so
$\beta_2(\pi_X;R)\leq\beta_1(\pi_X;R)$ and
$\beta_2(\pi_Y;R)\leq\beta_1(\pi_Y;R)$,
for any coefficient ring $R$.
Secondly, if $\pi_X$ and $\pi_Y$ are nilpotent then either 
$\beta=1$ or 3 and $\pi_X$ and $\pi_Y$
are free abelian groups, or $\beta=0,2,4$ or 6.
If we assume further that $\pi_X$ and $\pi_Y$ are torsion-free
nilpotent groups then there are very few known examples of such
groups with Hirsch length $>3$ and balanced presentations.
We give two examples illustrating the possibilities allowed by Theorem 2.

We shall say that an embedding has a group-theoretic property (e.g.,
abelian, nilpotent, $\dots$) 
if the groups $\pi_X$ and $\pi_Y$ have this property.

\section{restrained embeddings}

We begin with an observation that can be construed as a minimality condition.

\begin{lemma}
Let $J=j_{K,\gamma}$ be an embedding obtained from $j:M\to{S^4}$
by a proper $2$-knot surgery using the $2$-knot $K$ 
and the loop $\gamma\in\pi_{X(j)}$.
Then $J$ is not bi-epic, and $\pi_{X(J)}$ is not restrained,
unless $\pi_{X(J)}$ is itself a restrained $2$-knot group, 
in which case $\beta=\beta_1(M;\mathbb{Q})=1$ or $2$.
\end{lemma}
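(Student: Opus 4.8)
The plan is to read $\pi_{X(J)}$ off the surgery description and to treat the case that $\pi_{X(J)}$ is restrained as the single exception. Recall that $J=j_{K,\gamma}$ is obtained by removing an open tubular neighbourhood of an embedded representative of $\gamma$ from the interior of $X=X(j)$ and gluing in the exterior $E(K)$ of $K$ along the resulting copy of $S^1\times S^2$. Since this is done inside $X$, $Y(J)=Y$, and van Kampen's theorem gives $\pi_{X(J)}\cong\pi_X*_{\mathbb{Z}}\pi K$, where the amalgamated $\mathbb{Z}$ is generated by $\gamma$ in $\pi_X$ and by a meridian $\mu$ of $K$ in $\pi K$. (Properness of the surgery ensures that $\gamma$ has infinite order, so this really is an amalgamated free product, and that $K$ is non-trivial, so $\pi K\not\cong\mathbb{Z}$, i.e.\ $\langle\mu\rangle\neq\pi K$.) Now $j_{X(J)*}$ factors as $\pi\xrightarrow{j_{X*}}\pi_X\hookrightarrow\pi_X*_{\mathbb{Z}}\pi K$, and in the amalgam the image of $\pi_X$ meets $\pi K$ only in the proper subgroup $\langle\mu\rangle$; so this composite is not onto, and $J$ is not bi-epic --- whatever $\pi_{X(J)}$ turns out to be.

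Next I would decide when $\pi_X*_{\mathbb{Z}}\pi K$ can be restrained, i.e.\ can contain no non-abelian free subgroup. A non-trivial amalgamated free product $A*_C B$ --- one with $C\neq A$ and $C\neq B$ --- contains a non-abelian free subgroup unless $[A:C]=[B:C]=2$; this is a standard consequence of the action on the Bass--Serre tree. Since $\langle\mu\rangle\neq\pi K$, restrainedness of $\pi_{X(J)}$ forces either $\langle\gamma\rangle=\pi_X$ (and then $\pi_{X(J)}\cong\pi K$), or $[\pi_X:\langle\gamma\rangle]=[\pi K:\langle\mu\rangle]=2$. The latter is impossible: $\langle\mu\rangle\cong\mathbb{Z}$, being of index $2$, is normal and contains $[\pi K,\pi K]$, which would then be a subgroup of $\mathbb{Z}$ --- hence either infinite cyclic, forcing $H_1(\pi K)=\pi K/[\pi K,\pi K]$ to be finite, or trivial, forcing $\pi K$ abelian and so $\pi K\cong H_1(\pi K)$. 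Either conclusion contradicts that $\pi K$ is a $2$-knot group with $H_1(\pi K)\cong\mathbb{Z}$ and $\pi K\not\cong\mathbb{Z}$. Hence if $\pi_{X(J)}$ is restrained then $\pi_X\cong\mathbb{Z}$ and $\pi_{X(J)}\cong\pi K$ is a restrained $2$-knot group, as in the ``unless'' clause.

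It remains to deduce that $\pi_X\cong\mathbb{Z}$ forces $\beta=\beta_1(M;\mathbb{Q})\in\{1,2\}$. Since $H_1(j_X)$ is an epimorphism onto $H_1(X)\cong\mathbb{Z}$, we have $\beta\geq1$. For the upper bound, Alexander duality in $S^4$ gives $H_2(X;\mathbb{Q})\cong H_1(Y;\mathbb{Q})$ and $H_i(X;\mathbb{Q})=0$ for $i\geq3$, so $\chi(X)=\beta_0(X;\mathbb{Q})-\beta_1(X;\mathbb{Q})+\beta_2(X;\mathbb{Q})=1-1+\beta_1(Y;\mathbb{Q})$; and the Mayer--Vietoris sequence of $S^4=X\cup_M Y$ gives $H_1(M;\mathbb{Q})\cong H_1(X;\mathbb{Q})\oplus H_1(Y;\mathbb{Q})$, so $\beta=1+\beta_1(Y;\mathbb{Q})=1+\chi(X)$. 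Hence $\chi(Y)=2-\chi(X)=3-\beta$, and since $\chi(X)\leq\chi(Y)$ by our standing convention this gives $\beta-1\leq3-\beta$, i.e.\ $\beta\leq2$. Thus $\beta=1$ or $2$.

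The main obstacle is the middle step: deciding exactly when the amalgam $\pi_X*_{\mathbb{Z}}\pi K$ contains no non-abelian free subgroup, and in particular ruling out the infinite-dihedral degeneration using the defining feature $H_1(\pi K)\cong\mathbb{Z}$ of $2$-knot groups. The surgery bookkeeping in the first step (and the precise content of ``proper'') and the homological computation in the last step are routine.
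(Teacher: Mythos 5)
Your overall route is the paper's: identify $\pi_{X(J)}$ from the surgery description by van Kampen, note that the image of $\pi_1(M)$ lies in the image of $\pi_{X(j)}$, which is a proper subgroup (so $J$ is not bi-epic), reduce the restrained case to $\pi_{X(j)}\cong\mathbb{Z}$ and $\pi_{X(J)}\cong\pi K$, and then get $\beta=1$ or $2$ from $\chi(X(j))\in\{0,1\}$. Your Bass--Serre argument (free subgroups in a nontrivial amalgam unless both indices are $2$, with the dihedral case excluded via $H_1(\pi K)\cong\mathbb{Z}$) is a welcome filling-in of a step the paper only asserts, and your Euler-characteristic computation, including the appeal to the standing convention $\chi(X)\leq\chi(Y)$, is in effect what the paper's terse ``$\chi(X(j))=0$ or $1$'' rests on ($\chi(X)=\beta_2(X;\mathbb{Q})\geq0$ when $\pi_X\cong\mathbb{Z}$, and $\chi(X)\leq1$ from the convention).

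The one genuine divergence is your reading of ``proper''. You assume properness means $\gamma$ has infinite order and $K$ is nontrivial, so that $\pi_{X(J)}$ is an honest amalgamated free product over $\mathbb{Z}$. The paper's proof does not assume this: it sets $C\cong\mathbb{Z}/q\mathbb{Z}$ equal to the subgroup generated by $\gamma$ (so $\gamma$ may have finite order $q$), writes $\pi_{X(J)}\cong\pi_{X(j)}*_C\pi K/\langle\langle t^q\rangle\rangle$, and uses properness only to conclude that $\langle\langle t^q\rangle\rangle$ is a proper normal subgroup of $\pi K$. In the finite-order case your argument does not apply as written: the non-bi-epic claim needs the image of $C$ to be a proper subgroup of $\pi K/\langle\langle t^q\rangle\rangle$ rather than $\langle\mu\rangle\neq\pi K$, and the restrainedness dichotomy would have to be redone for a pushout over a finite cyclic group, where the conclusion ``restrained forces $\pi_{X(j)}\cong\mathbb{Z}$'' is no longer a formal consequence of the tree action. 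So, measured against the statement as the paper understands it (with ``proper'' as defined in the cited earlier paper), your proposal covers only the infinite-order case; you flagged the uncertainty about ``proper'' yourself, but this is exactly where the two arguments part company.
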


\begin{proof}
Let $C\cong\mathbb{Z}/q\mathbb{Z}$ be the subgroup of $\pi_{X(j)}$
generated by $\gamma$, and let $t$ be a meridian for 
the knot group $\pi{K}$.
Then 
\[
\pi_{X(J)}\cong\pi_{X(j)}*_C\pi{K}/\langle\langle{t^q}\rangle\rangle.
\]
Since the 2-knot surgery is proper,
$\langle\langle{t^q}\rangle\rangle$ is a proper normal subgroup 
of $\pi{K}$.
Since the image of $\pi_1(M)$ lies in $\pi_{X(j)}$,
the embedding $J$ cannot be bi-epic.
Moreover $\pi_{X(J)}$ can only be restrained if $\pi_{X(j)}\cong\mathbb{Z}$,
in which case $\pi_{X(J)}\cong\pi{K}$ and
$\chi(X(j))=0$ or 1, and so $\beta=1$ or 2.
\end{proof}

Let $G$ be a group.
Then $G'$ and $\zeta{G}$ shall denote the commutator subgroup 
and centre of $G$, respectively.

If $G$ is finitely generated and restrained then $def(G)\leq1$.
If $def(G)=1$ then $G$ is an ascending HNN extension, and so the first $L^2$-Betti number $\beta^{(2)}_1(G)=0$.
Hence $g.d.G\leq2$, by \cite[Theorem 2.5]{Hi}. 
The argument is homological, and so it suffices that the augmentation 
ideal in $\mathbb{Z}[G]$ have a presentation of deficiency 1 as a $\mathbb{Z}[G]$-module.
A finitely generated group $G$ is {\it balanced\/} if it has deficiency $\geq0$,
and is {\it homologically balanced\/} if
$\beta_2(G;R)\leq\beta_1(G;R)$, for any coefficient ring $R$.

Let $BS(1,m)$ be the Baumslag-Solitar group with presentation
$\langle{t,a}\mid{tat^{-1}=a^m}\rangle$, for $m\in\mathbb{Z}\setminus\{0\}$.
Then $BS(1,1)\cong\mathbb{Z}^2$, while $BS(1,-1)\cong\pi_1(Kb)$
is the Klein bottle group.
 
\begin{theorem}
Let $j:M\to{S^4}$ be a bi-epic embedding such that 
$\pi_X$ and $\pi_Y$ are restrained.

If $\beta=\beta_1(M;\mathbb{Q})$ is odd then $\chi(X)=0$ and $\chi(Y)=2$,
and $X$ is aspherical.
If, moreover,
$\pi_X$ is almost coherent or elementary amenable then
$\pi_X\cong\mathbb{Z}$ or $BS(1,m)$, for some $m\not=0$,
and $\beta=1$ or $3$.

If $\beta$ is even then $\chi(X)=\chi(Y)=1$,
and $\pi_X$ and $\pi_Y$ are homologically balanced.
\end{theorem}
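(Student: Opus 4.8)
The plan is to compute the rational homology of $X$ and $Y$ from the Mayer--Vietoris sequence of $S^4=X\cup_M Y$ and Poincar\'e--Lefschetz duality, read off the Euler characteristics by a parity argument, and then treat the two parities separately. Since $\chi(S^4)=2$ and $\chi(M)=0$ we have $\chi(X)+\chi(Y)=2$. In the Mayer--Vietoris sequence the connecting map $H_4(S^4)\to H_3(M)$ sends the fundamental class of $S^4$ to that of $M$ and is therefore an isomorphism; since $H_4(X)=H_4(Y)=0$ (nonempty boundary) this forces $H_3(X)=H_3(Y)=0$, over $\mathbb Q$ and indeed over every field, and then $H_i(X;F)\oplus H_i(Y;F)\cong H_i(M;F)$ for $i=1,2$ and any field $F$. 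Moreover, by excision $H_\ast(S^4,Y)\cong H_\ast(X,M)$, the exact sequence of $(S^4,Y)$ gives $H_2(X,M)\cong H_1(Y)$, and Lefschetz duality $H_2(X,M)\cong H^2(X)$ then yields $\beta_2(X;\mathbb Q)=\beta_1(Y;\mathbb Q)$ and, symmetrically, $\beta_2(Y;\mathbb Q)=\beta_1(X;\mathbb Q)$. Hence $\chi(X)=1-\beta_1(X;\mathbb Q)+\beta_1(Y;\mathbb Q)=1+\beta-2\beta_1(X;\mathbb Q)\equiv 1+\beta\pmod2$, and likewise $\chi(Y)\equiv 1+\beta\pmod2$.

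By the inequality $0\le\chi(X)\le\chi(Y)$ proved earlier and $\chi(X)+\chi(Y)=2$ we get $\chi(X)\in\{0,1\}$, and the congruence pins this down: if $\beta$ is even then $\chi(X)$ is odd, so $\chi(X)=\chi(Y)=1$; if $\beta$ is odd then $\chi(X)$ is even, so $\chi(X)=0$ and $\chi(Y)=2$. This gives the first statement in each case. For the even case it remains to see $\pi_X$ (and, symmetrically, $\pi_Y$) is homologically balanced: over any field $F$ we have shown $\beta_3(X;F)=\beta_4(X;F)=0$, so $\chi(X)=1$ forces $\beta_2(X;F)=\beta_1(X;F)$. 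The classifying map $X\to K(\pi_X,1)$ can be realised as the inclusion of a subcomplex obtained by adjoining cells of dimension $\ge3$, hence is an isomorphism on $H_1(-;F)$ and an epimorphism on $H_2(-;F)$, so $\beta_2(\pi_X;F)\le\beta_2(X;F)=\beta_1(X;F)=\beta_1(\pi_X;F)$. As this holds for every prime field, the universal coefficient theorem gives $\beta_2(\pi_X;R)\le\beta_1(\pi_X;R)$ for every coefficient ring $R$, and the same argument applies to $\pi_Y$.

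For the odd case $\beta_1(X;\mathbb Q)=\tfrac12(\beta+1)\ge1$, so $\pi_X$ is infinite; being restrained, it then has one or two ends (infinitely many ends would produce a non-abelian free subgroup). The step I expect to be the main obstacle is that $X$ is aspherical. First, since $\partial X=M$ is connected and $j_{X*}\colon\pi_1(M)\to\pi_X$ is onto, the preimage $\widetilde M$ of $M$ in $\widetilde X$ is connected, so the exact sequence of $(\widetilde X,\widetilde M)$ gives $H_0(\widetilde X,\widetilde M)=H_1(\widetilde X,\widetilde M)=0$; Poincar\'e--Lefschetz duality for $(X,M)$ then yields $H^3(X;\mathbb Z[\pi_X])=H^4(X;\mathbb Z[\pi_X])=0$, hence $H^k(X;\mathbb Z[\pi_X])=0$ for all $k\ge3$. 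To finish, one must show $\pi_2(X)=H_2(\widetilde X)=0$; I would obtain this from the $L^2$-Euler characteristic identity $0=\chi(X)=\sum(-1)^i\beta^{(2)}_i(X)$ — in which $\beta^{(2)}_0(X)=\beta^{(2)}_4(X)=0$ and $\beta^{(2)}_1(X)=0$ (a finitely presentable restrained group has vanishing first $L^2$-Betti number, cf.\ the deficiency discussion above), together with $L^2$-Poincar\'e--Lefschetz duality, forcing $\beta^{(2)}_2(X)=0$ — and then combine this vanishing with the bounded-end structure of $\pi_X$ and the $\mathbb Z[\pi_X]$-duality above to deduce $\pi_2(X)=0$, and so $H_k(\widetilde X)=0$ for all $k\ge2$, as in the arguments of \cite{Hi17,Hi19}.

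Granting that $X$ is aspherical, $X\simeq K(\pi_X,1)$ is a finite complex, so $\pi_X$ is torsion-free and of type $F$, and the vanishing $H^k(X;\mathbb Z[\pi_X])=0$ for $k\ge3$ noted above becomes $H^k(\pi_X;\mathbb Z[\pi_X])=0$ for $k\ge3$; since $\pi_X$ is $FP_\infty$ this gives $\mathrm{c.d.}\,\pi_X\le2$. Suppose in addition $\pi_X$ is elementary amenable or almost coherent. If $\mathrm{c.d.}\,\pi_X=1$, then $\pi_X$ is a finitely generated torsion-free group of cohomological dimension $1$, hence infinite cyclic. If $\mathrm{c.d.}\,\pi_X=2$, then $\pi_X$ is a finitely presentable restrained group (so of deficiency $\le1$) of type $F$, and the extra hypothesis forces it to be an ascending HNN extension of an infinite cyclic group, i.e.\ $BS(1,m)$ for some $m\ne0$ (so $\pi_X\cong\mathbb Z^2$ when $m=1$). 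Finally $\tfrac12(\beta+1)=\beta_1(X;\mathbb Q)=\beta_1(\pi_X;\mathbb Q)$, which is $1$ for $\pi_X\cong\mathbb Z$ and for $BS(1,m)$ with $m\ne1$, and $2$ for $\pi_X\cong\mathbb Z^2$; hence $\beta=1$ or $3$.
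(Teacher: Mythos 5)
Your Euler-characteristic bookkeeping (Mayer--Vietoris plus Lefschetz duality giving $\chi(X)\equiv1+\beta\pmod2$, then $0\le\chi(X)\le\chi(Y)$ pinning down $(\chi(X),\chi(Y))$) and your even case are essentially the paper's argument; the paper gets the homologically balanced conclusion from a length-$2$ free $\mathbb{Z}[\pi_X]$-chain model, which handles arbitrary coefficient rings directly, whereas your prime-field-plus-UCT reduction needs a word about what $\beta_i(-;R)$ means for general $R$ --- a minor point. The genuine gap is in the odd case, at exactly the step you flag: asphericity of $X$. Vanishing of $\beta^{(2)}_2(X)$ does not by itself give $\pi_2(X)=0$: $L^2$-Betti numbers do not see the integral homology of the universal cover unless one already knows that $C_*(X;\mathbb{Z}[\pi_X])$ is chain homotopy equivalent to a complex of length $\le2$ (compare $S^2\times T^2$, all of whose $L^2$-Betti numbers vanish while $\pi_2\neq0$). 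With such a length-$2$ model, $\pi_2(X)=H_2(\widetilde{X})=\ker d_2$ embeds in the $L^2$-kernel, a Hilbert module of von Neumann dimension $\beta^{(2)}_2(X)=0$, hence vanishes; without it the argument stops. The missing input is precisely what the paper imports from the bi-epic hypothesis via \cite[Theorem 5.1]{Hi17}, namely $c.d.X\le2$ (vanishing of $H^k(X;-)$ for $k\ge3$ with arbitrary module coefficients, e.g.\ via Wall's criterion), which is strictly stronger than the vanishing of $H^k(X;\mathbb{Z}[\pi_X])$ for $k\ge3$ that you derive; asphericity is then \cite[Theorem 2.5]{Hi}. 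Your ``bounded-end structure plus duality'' sentence does not substitute for this reduction.

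A second, repairable, overstatement: ``a finitely presentable restrained group has vanishing first $L^2$-Betti number'' is not what the paper's preliminary discussion says (it only covers restrained groups of deficiency $1$, via ascending HNN extensions), and it is not known in that generality. In your situation it can be fixed: $\beta_1(\pi_X;\mathbb{Q})=\beta_1(X;\mathbb{Q})=\tfrac12(\beta+1)\ge1$, and a finitely presentable restrained group with positive first Betti number is an ascending HNN extension over a finitely generated base (Bieri--Strebel), whence $\beta^{(2)}_1(\pi_X)=0$ by L\"uck's theorem; but this needs to be said, since as written the parenthetical justification is a non sequitur. Once asphericity and $c.d.\pi_X\le2$ are in hand, your classification of $\pi_X$ as $\mathbb{Z}$ or $BS(1,m)$ under the extra hypotheses, and the count giving $\beta=1$ or $3$, agree with the paper's use of \cite[Corollary 2.6.1]{Hi}.
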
  

\begin{proof}
If $\pi_X$ and $\pi_Y$ are each restrained then $\chi(X), \chi(Y)\geq0$.
Hence $(\chi(X),\chi(Y))$ is determined by the parity of $\beta$,
since $0\leq\chi(X)\leq\chi(Y)\leq2$ and $\chi(X)\equiv\chi(Y)\equiv1+\beta$ 
{\it mod} (2).

Since $j$ is bi-epic, $c.d.X\leq2$ and $c.d.Y\leq2$,
by \cite[Theorem 5.1]{Hi17}.
Hence if $\chi(X)=0$ and $\pi_X$ is restrained then $X$ is aspherical,
by \cite[Theorem 2.5]{Hi}.
If, moreover, $\pi_X$ is elementary amenable or almost coherent 
then $\pi_X\cong\mathbb{Z}$ or $BS(1,m)$ for some $m\not=0$, 
by \cite[Corollary 2.6.1]{Hi}.
Hence $\beta=\beta_1(X;\mathbb{Q})+\beta_2(X;\mathbb{Q})=1$,
if $\pi_X\not\cong{BS(1,1)}=\mathbb{Z}^2$,
and $\beta=3$ if $\pi_X\cong\mathbb{Z}^2$.

Since $c.d.X\leq2$ and $X$ is homotopy equivalent to a finite complex
the cellular chain complex
$C_*(X;\mathbb{Z}[\pi_X])$ is chain homotopy equivalent to a finite free
complex $C_*$ of length 2, with $C_0$ of rank 1.
If $\beta$ is even then $\chi(X)=1$, 
and so $C_1$ and $C_2$ have the same rank.
Hence $\pi_X$ is homologically balanced.
Similarly for $\pi_Y$.
\end{proof}

The latter part of the argument shows that
the augmentation ideal in $\mathbb{Z}[\pi_X]$ 
has a square presentation matrix
(i.e., has a presentation of deficiency 0 as a $\mathbb{Z}[G]$-module).
We do not know whether the group $\pi_X$ must have deficiency $\geq0$.

There are examples of each type. (See below).
There is also a partial converse. 
If $\chi(X)=0$ and $\pi_X\cong{BS(1,m)}$ for some $m\not=0$ then 
$X$ is aspherical and
$j_{X*}$ is an epimorphism, by \cite[Theorem 5.1]{Hi17}.

\section{nilpotent embeddings}

Nilpotent embeddings are always bi-epic, 
since homomorphisms to a nilpotent group which induce epimorphisms on abelianization are epimorphisms.

\begin{theorem}
Let $j:M\to{S^4}$ be an embedding such that 
$\pi_X$ and $\pi_Y$ are nilpotent.

If $\beta=\beta_1(M;\mathbb{Q})$ is odd and $\pi_X$ and $\pi_Y$ are nilpotent then either
$X\simeq{S^1}$ and $Y\simeq{S^2}$ or
$X\simeq{T}$ and $Y\simeq{S^1\vee{S^2}\vee{S^2}}$.

If $\beta$ is even then $\beta=0,2,4$ or $6$,
and $\pi_X$ and $\pi_Y$ are each $3$-generated.
\end{theorem}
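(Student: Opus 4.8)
The plan is to derive Theorem~2 from Theorem~1, which applies because a nilpotent embedding is bi-epic (as noted just above) and a finitely generated nilpotent group is polycyclic, hence elementary amenable; note also that $\pi_X$ and $\pi_Y$ are finitely generated, being quotients of $\pi_1(M)$.

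\emph{The case $\beta$ odd.} Theorem~1 gives $\chi(X)=0$, $\chi(Y)=2$, $X$ aspherical, and $\pi_X\cong\mathbb Z$ or $BS(1,m)$; since $BS(1,m)$ is nilpotent only for $m=1$ (when it is $\mathbb Z^2$), we get $\pi_X\cong\mathbb Z$ with $\beta=1$, or $\pi_X\cong\mathbb Z^2$ with $\beta=3$, and correspondingly $X\simeq S^1$ or $X\simeq T$. I would then read off $H_*(Y;\mathbb Z)$ from Alexander duality in $S^4$ ($\widetilde H_i(Y)\cong\widetilde H^{3-i}(X)$): this gives $H_*(Y;\mathbb Z)=(\mathbb Z,0,\mathbb Z,0,\dots)$ when $X\simeq S^1$ and $(\mathbb Z,\mathbb Z,\mathbb Z^2,0,\dots)$ when $X\simeq T$. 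Hence $\pi_Y^{\mathrm{ab}}$ is $0$ or $\mathbb Z$, and since a nilpotent group with perfect abelianization is trivial and one with infinite cyclic abelianization is infinite cyclic, $\pi_Y\cong1$ or $\mathbb Z$. To upgrade this to homotopy type: $j$ bi-epic gives $c.d.\,Y\leq2$ by \cite[Theorem~5.1]{Hi17}, so $C_*(\widetilde Y;\mathbb Z[\pi_Y])$ is chain homotopy equivalent to a finite free complex of length $2$; chasing $0\to\pi_2Y\to C_2\to C_1\to C_0\to\mathbb Z\to0$ (with $C_0\cong\mathbb Z[\pi_Y]$ and $I_{\pi_Y}$ free of rank $\leq1$) shows $\pi_2Y=H_2(\widetilde Y)$ is a finitely generated projective, hence free, $\mathbb Z[\pi_Y]$-module. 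When $\pi_Y=1$ this forces $Y\simeq\bigvee^rS^2$ with $r=\beta_2(Y)=1$, i.e. $Y\simeq S^2$; when $\pi_Y\cong\mathbb Z$ the Wang sequence of the infinite cyclic cover gives $\operatorname{coker}(t-1\mid\pi_2Y)\cong H_2(Y;\mathbb Z)=\mathbb Z^2$, so $\pi_2Y\cong\mathbb Z[\mathbb Z]^2$, and a map $S^1\vee S^2\vee S^2\to Y$ realising a generator of $\pi_1$ and a $\mathbb Z[\mathbb Z]$-basis of $\pi_2$ is an isomorphism on $\pi_1$ and $\pi_2$, hence (being a homology isomorphism of simply connected complexes after passing to universal covers) a homotopy equivalence. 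So $Y\simeq S^1\vee S^2\vee S^2$.

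\emph{The case $\beta$ even.} Theorem~1 gives $\chi(X)=\chi(Y)=1$ and that $\pi_X,\pi_Y$ are homologically balanced. Mayer--Vietoris for $S^4=X\cup_MY$ gives $H_1(M;\mathbb Z)\cong H_1(X;\mathbb Z)\oplus H_1(Y;\mathbb Z)$, so $\beta=\operatorname{rk}\pi_X^{\mathrm{ab}}+\operatorname{rk}\pi_Y^{\mathrm{ab}}$. For finitely generated nilpotent $G$ one has $G'\leq\Phi(G)$, so $d(G)=d(G^{\mathrm{ab}})\geq\operatorname{rk}G^{\mathrm{ab}}$; thus once $\pi_X$ and $\pi_Y$ are shown to be $3$-generated we get $\beta\leq6$, and as $\beta$ is even, $\beta\in\{0,2,4,6\}$. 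So the even case reduces to: \emph{a finitely generated homologically balanced nilpotent group is $3$-generated.} I would attack this one prime at a time: fix $p$, set $d=d_p(G)=\dim_{\mathbb F_p}H_1(G;\mathbb F_p)$, and show $d\leq3$ (then $d(G)=\max_pd_p(G)\leq3$). Since finitely generated nilpotent groups are good (in Serre's sense), the pro-$p$ completion $\widehat G_p$ has a pro-$p$ presentation with $d$ generators and $r=\dim_{\mathbb F_p}H^2(\widehat G_p;\mathbb F_p)=\beta_2(G;\mathbb F_p)\leq d$ relations. If $G^{\mathrm{ab}}$ is finite then $\widehat G_p$ is a finite $p$-group and Golod--Shafarevich gives $r>d^2/4$, so $d<4$; if $G^{\mathrm{ab}}$ is infinite then $\widehat G_p$ is infinite, hence (on a uniform open subgroup) $p$-adic analytic, and here one bounds $\dim H^2$ from below via the cohomology of nilpotent (restricted) Lie algebras. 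The rational version of the same scheme, applied to the Malcev Lie algebra of $\pi_X$, bounds $\operatorname{rk}\pi_X^{\mathrm{ab}}$; a complementary constraint is that a cup product of classes pulled back from $X$ lies in $H^3(X;\mathbb Q)=0$, which restricts the cup product form of $M$ compatibly with the splitting $H^1(M;\mathbb Q)=H^1(X;\mathbb Q)\oplus H^1(Y;\mathbb Q)$.

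\emph{The hard part} is the displayed group-theoretic statement, and within it the case where $\widehat G_p$ is infinite: Golod--Shafarevich only forbids \emph{finite} $p$-groups with too few relations, and being infinite is not itself a contradiction here (e.g. $\mathbb Z^3$ is homologically balanced). So the bound ``$3$'' must come out of nilpotent Lie algebra cohomology --- concretely, one needs that a nilpotent Lie algebra $\mathfrak g$ over a field with $\dim H^2(\mathfrak g)\leq\dim H^1(\mathfrak g)$ satisfies $\dim H^1(\mathfrak g)\leq3$ (sharp, as $\mathbb Q^3$ shows) --- and establishing this sharp inequality, while simultaneously keeping the free and torsion parts of $\pi_X^{\mathrm{ab}}$ under control so that the \emph{same} bound $3$ emerges, is where I expect the real difficulty to lie.
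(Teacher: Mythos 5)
Your overall strategy matches the paper's: deduce the odd case from Theorem~1 (your Alexander-duality computation of $H_*(Y)$ and the subsequent identification of the homotopy types is more detailed than the paper, which simply quotes \cite[Theorem 14]{Hi19}, but it is sound), and for the even case reduce to the purely group-theoretic assertion that a finitely generated, homologically balanced nilpotent group is $3$-generated, attacked prime by prime via the pro-$p$ completion and goodness. Up to the point where you have a minimal pro-$p$ presentation of $\widehat{G}_p$ with $d$ generators and $r\leq d$ relations, you are exactly on the paper's track.

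The genuine gap is the one you flag yourself: you only invoke the Golod--Shafarevich inequality for \emph{finite} $p$-groups, and you leave the infinite case to an unproved inequality about $H^2$ of nilpotent Lie algebras. The paper closes precisely this gap with a single citation: a finitely generated pro-$p$ nilpotent group is $p$-adic analytic, and by Lubotzky \cite[Theorem 2.7]{Lu83} a $p$-adic analytic pro-$p$ group $G$ satisfies the Golod--Shafarevich-type inequality $r>d^2/4$ unless $G\cong\widehat{\mathbb{Z}}_p$. Combined with $r\leq d$ this gives $d<4$ at once (the exceptional case $\widehat{\mathbb{Z}}_p$ having $d=1$), and then $\beta=\beta_1(X;\mathbb{Q})+\beta_1(Y;\mathbb{Q})\leq 2d\leq 6$. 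So the missing ingredient is not a new computation in nilpotent Lie algebra cohomology but Lubotzky's extension of Golod--Shafarevich to $p$-adic analytic groups; without it (or an equivalent substitute) your argument does not establish the bound $d\leq3$, which is the heart of the theorem. Your auxiliary suggestions (cup products landing in $H^3(X;\mathbb{Q})=0$, Malcev Lie algebras) are not used by the paper and are not needed once Lubotzky's theorem is in hand.
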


\begin{proof}

Suppose first that $\beta$ is odd.
Then $X$ is aspherical, since $\chi(X)=0$ and $\pi_X$ is nilpotent.
Hence $\pi_X\cong\mathbb{Z}$ or $\mathbb{Z}^2$, since $c.d.X\leq2$.
Since $\pi_Y$ is nilpotent and $H_1(Y;\mathbb{Z})\cong{H^2(X;\mathbb{Z})}=0$
or $\mathbb{Z}$, $\pi_Y=1$ or $\mathbb{Z}$.
The further details in this case are given in
\cite[Theorem 14]{Hi19}.

We may assume that $\beta$ is even and $\beta>4$.
Since $\chi(X)=1$ and $H_i(X;R)=0$ for $i>2$,
we have $\beta_2(X;R)=\beta_1(X;R)$,
and so $\beta_2(\pi_X;R)\leq\beta_1(\pi_X;R)$,
for any coefficient ring $R$.
Since $\pi_X$ is finitely generated and nilpotent,
there is a prime $p$ such that $\pi_X$ can be generated by $d=\beta_1(\pi_X;\mathbb{F}_p)$ elements.
Let $\widehat{\pi_X}$ be the pro-$p$ completion of $\pi_X$.
Since $\pi_X$ is nilpotent, it is $p$-good,
and so $\beta_i(\widehat{\pi_X};\mathbb{F}_p)=
\beta_i(\pi_X;\mathbb{F}_p)$, for all $i$.
The group $\widehat{\pi_X}$ is a pro-$p$ analytic group, 
and so has a minimal presentation with 
$d=\beta_1(\widehat{\pi_X};\mathbb{F}_p)$ generators and
$r=\beta_2(\widehat{\pi_X};\mathbb{F}_p)$ relators.
Since $\beta>2$, $\widehat{\pi_X}\not\cong\widehat{\mathbb{Z}}_p$,
and so $r>\frac{d^2}4$, by \cite[Theorem 2.7]{Lu83}.
(Similarly for $\pi_Y$.)
Therefore $d\leq3$ and $\beta\leq2d\leq6$.
\end{proof}

If $\pi_X$ is nilpotent and $\beta_1(X;\mathbb{Q})=0$ then $\pi_X$ is finite,
while if $\beta_1(X;\mathbb{Q})=1$ then $\pi_X\cong{F\rtimes\mathbb{Z}}$,
where $F$ is finite.
Thus if $\pi_X$ and $\pi_Y$ are torsion-free nilpotent and $\beta\leq3$
then $\pi_X$ and $\pi_Y$ are abelian.
See \cite[Theorems 10, 11 and 16]{Hi19} for more on such embeddings.

It is reasonable to restrict consideration further to torsion-free nilpotent groups, as such groups satisfy the Novikov conjecture, and the surgery obstructions are maniable.

If $G$ is torsion-free nilpotent of Hirsch length $h$ then $c.d.G=h$.
The first non-abelian examples are the $\mathbb{N}il^3$-groups $\Gamma_q$,
with presentations $\langle{x,y,z}\mid[x,y]=z^q,~[x,z]=[y,z]=1\rangle$.
Some of the argument of \cite[Theorem 18]{Hi19} for the group $\mathbb{Z}^3$
extends to the groups $\Gamma_q$.
The homology of the pair $(X,M)$ with coefficients $\mathbb{Z}[\pi_X]$
gives an exact sequence
\[
H_2(X;\mathbb{Z}[\pi_X])\to{H^2(X;\mathbb{Z}[\pi_X])}\to
{H_1(M;\mathbb{Z}[\pi_X])}\to0.
\]
Let $K_X=\mathrm{Ker}(j_{X*})={H_1(M;\mathbb{Z}[\pi_X])}$ 
and $P=H_2(X;\mathbb{Z}[\pi_X])$.
Since $c.d.X\leq2$ and $c.d.\Gamma_q=3$ we see that 
$P$ is a projective $\mathbb{Z}[\Gamma_q]$-module of rank 1.
It is stably free since $\widetilde{K}_0(\mathbb{Z}[G])=0$ for
torsion-free poly-Z groups $G$, 
and $P$ has rank 1 since $\chi(X)=1$.
Since $Ext^i_{\mathbb{Z}[\pi_X])}(\mathbb{Z},\mathbb{Z}[\pi_X])=0$ 
for $i\leq2$
we then see that $H^2(X;\mathbb{Z}[\pi_X])\cong{P^\dagger}=
\overline{Hom_{\mathbb{Z}[\pi_X])}(P,\mathbb{Z}[\pi_X])}$,
and so is also stably free of rank 1.
We thus have an exact sequence 
\[
P\to{P^\dagger}\to{K_X}\to0.
\]
However it is not clear that this is as potentially useful as the analogous conditions on abelian embeddings in Theorems 10, 12, 14 and 18 of \cite{Hi19}.
Moreover, if $G$ is a nonabelian poly-Z group 
then there are infinitely many isomorphism classes of 
stably free $\mathbb{Z}[G]$-modules $P$ such that
${P\oplus\mathbb{Z}[G]}\cong\mathbb{Z}[G]^2$ \cite{Ar81}.
We do not know which can be realized as $H_2(X;\mathbb{Z}[\pi_X])$,
for an embedding $j$ with $\pi_X\cong\Gamma_q$.
(This contrasts strongly with the case $\pi_X\cong\mathbb{Z}^3$,
for then $P$ is a free module.)

It can be shown that there is just one 
homologically balanced torsion free nilpotent group with Hirsch
length $h=4$, and none with $h=5$ or with $h=6$ and $\beta=3$
\cite{Hi20}.
The example with $h=4$ is an extension of $\mathbb{Z}^2$ by
$\mathbb{Z}^2$ with presentation 
\[
\langle{t,u}\mid[t,[t,[t,u]]]=[u,[t,u]]=1\rangle.
\]
If we consider more general solvable groups, 
we can find many metabelian groups of Hirsch length 5 with balanced presentations.
One such group has presentation 
\[
\langle{t,x}\mid{t^4xt^{-4}=t^2x^2t^{-1}x^{-1}t^{-1}x^{-1}},
~xt^2xt^{-2}=t^2xt^{-2}x\rangle.
\]
This example is the group of a Cappell-Shaneson 3-knot, 
with commutator subgroup $\mathbb{Z}^4$. 
If $G$ is torsion-free nilpotent and $h(G)>>5$ is $def(G)<0$?
In particular, is this so if $h(G)\geq6$ and $G/G'\cong\mathbb{Z}^3$?

\section{examples}

Pairs of groups with balanced presentations and isomorphic abelianizations 
can be realized by embeddings of 3-manifolds \cite{Li04}.

Our examples are based on 3- and 4-component links
$L=L_a\cup{L_u}\cup{L_v}$ or $L_a\cup{L_b}\cup{L_u}\cup{L_v}$,
where $L_a\cup{L_b}$ and $L_u\cup{L_v}$ are trivial links.
The 3-manifold $M$ obtained by 0-framed surgery on $L$ embeds in $S^4$,
and the complementary regions have Kirby-calculus presentations
in which one of these sublinks is 0-framed and the other dotted
(the roles being exchanged for the two regions).
The components $L_a$ and $L_b$ represent words $A$ and $B$ in $F(u,v)$ and
the components $L_u$ and $L_v$ represent words $U$ and $V$ in $F(a,b)$,
and we may arrange that $\pi_X$ and $\pi_Y$ have presentations
$\langle{u,v}\mid{A,B}\rangle$ and $\langle{a,b}\mid{U,V}\rangle$, respectively.
(See the Figure.)
The embeddings constructed in this way are always bi-epic, 
since $\pi_X$ and $\pi_Y$ are generated by the images of meridians of $L$.
We shall use the tabulation of links in \cite{Rol}.

\setlength{\unitlength}{1mm}
\begin{picture}(90,45)(-28,10)

\put(-2,19.1){$\vartriangleright$}
\put(-4,22){$u$}
\put(20,20){$0$}
\put(-16,34){$\bullet$}
\put(78,34){$\bullet$}
\put(63,19.1){$\vartriangleright$}
\put(61,22){$v$}
\put(29,48.5){$\vartriangleright$}
\put(27,50){$a$}

\curve(11,43.5,0,50, -10.6,45.6,-15,35,-10.6,24.6,0,20,10.5,24)

\qbezier(12,42)(15,35)(11.6,26)

\qbezier(9.5,47)(30,52)(54,46.5)
\qbezier(10.8,45)(30,50)(51,44.5)
\qbezier(54,46.5)(57,45)(53.5,44)
\qbezier(9,44.4)(7.5,42.9)(10,42.5)

\qbezier(10,42.5)(30,46)(44.2,43)
\qbezier(11,25)(30,22)(44.2,25)

\qbezier(4.5,30)(5,26)(11,25)

\qbezier(7.5,46)(5,45)(4.55,42)
\qbezier(4.5,42)(2,36)(4.5,30)

\curve(55,47,64,50, 74.6,45.6,79,35,74.6,24.6,64,20,55,23)

\qbezier(53.6,45.7)(52,44.4)(50.8,39)
\qbezier(55,23)(52,25)(50.8,29)

\qbezier(47,39)(49,39)(52,39)
\qbezier(47,29)(49,29)(52,29)
\qbezier(47,39)(47,34)(47,29)
\qbezier(52,39)(52,34)(52,29)

\qbezier(48.2,39)(48.2,42)(44.2,43)
\qbezier(48.2,29)(48.2,26)(44.2,25)

\put(47.8,33.4){$m$}

\put(25, 12){Figure 1}
\put(17,34){$A=uvu^{-1}v^{-m}$}

\end{picture}

For example, consider the 3-component link in Figure 1, 
in which the strands in the box have $m$ full twists,
$A=uvu^{-1}v^{-m}$, $B=U=1$ and $V=a^{m-1}$.

When $m=0$ the link is the split union of an unknot and the Hopf link $2^2_1$,
$M\cong{S^2}\times{S^1}$,
$X\cong{D^3}\times{S^1}$ and $Y\cong{S^2}\times{D^2}$.
When $m=1$ the link is the Borromean rings $6^3_2$,
and $X$ is a regular neighbourhood of the unknotted embedding of the torus
$T$ in $S^4$.
When $m=-1$, the link is $8^3_9$,
and $X$ is a regular neighbourhood of the unknotted embeding of the 
Klein bottle $Kb$ in $S^4$ with normal Euler number 0.
In general, $X$ is aspherical, $\pi_X\cong{BS(1,m)}$ and $\pi_Y\cong\mathbb{Z}/(m-1)\mathbb{Z}$.
(Note however that the boundary of a regular neighbourhood of the Fox 2-knot 
with group $BS(1,2)$ gives an embedding of $S^2\times{S^1}$ with
$\pi_X\cong{BS(1,2)}$ and $\chi(X)=0$, 
but this embedding is not bi-epic and $X$ is not aspherical.)

We may also construct embeddings such that $\pi_X\cong{BS(1,m)}$
and $\chi(X)=1$, while $\pi_Y\cong {BS(1,m)}$
or $\mathbb{Z}\oplus\mathbb{Z}/(m-1)\mathbb{Z}$.
These require 4-component links.

This is also the case if $\pi_X\cong\Gamma_q$,
for then $\beta_2(X)=\beta_1(X)=2$.
If $\pi_Y$ is abelian then $\pi_Y\cong\mathbb{Z}^2$
\cite[Theorem 7.1]{Hi17}, and so $q=1$.

It is easy to find a 4-component link $L=L_a\cup{L_b}\cup{L_u}\cup{L_v}$ 
with each 2-component sublink trivial, 
and such that $L_a$ and $L_b$ represent 
(the conjugacy classes of) $A=[u,[u,v]]$ and $B=[v,[u,v]]$ in $F(u,v)$,
respectively, while $L_u$ and $L_v$ have image 1 in $F(a,b)$.
Arrange the link diagram so that $L_u$ is on the left, 
$L_v$ on the right, $L_a$ at the top and $L_b$ at the bottom. 
We may pass one bight of $L_a$ which loops around $L_u$ under a 
similar bight of $L_b$, so that $U$ now represents $[a,b]$ in $F(a,b)$.
Finally we use claspers to modify $L_u$ and $L_v$ so 
that they represent $[b,v]$ in $F(b,v)$ and $[a,u]$ in $F(a,u)$.
We obtain the link of Figure 2.

\setlength{\unitlength}{1mm}
\begin{picture}(90,85)(-32,-5)

\put(-10,9.05){$\vartriangleright$}
\put(-12.5,11.3){$u$}
\put(59,8){$v$}
\put(61,6.05){$\vartriangleright$}
\put(39,56){$\vartriangleright$}
\put(36.5,58.3){$a$}
\put(16,15){$\vartriangleright$}
\put(13.5,12.5){$b$}

\linethickness{1pt}
\put(-20,71){\line(1,0){20}}
\put(-20,10){\line(0,1){61}}
\put(-20,10){\line(1,0){73}}
\put(-6,68){\line(1,0){76}}
\put(-6,62){\line(1,0){5}}
\put(-6,62){\line(0,1){6}}
\put(-2,66){\line(1,0){52}}
\put(-2,64){\line(1,0){1}}
\put(-2,64){\line(0,1){2}}
\put(1,64){\line(1,0){9}}
\put(1,62){\line(1,0){7}}
\put(0,69){\line(0,1){2}}
\put(0,61){\line(0,1){4}}
\put(8,61){\line(0,1){1}}
\put(10,58){\line(0,1){6}}
\put(8,58){\line(1,0){2}}
\put(8,58){\line(0,1){1}}
\put(50,7){\line(1,0){20}}
\put(70,7){\line(0,1){61}}
\put(50,58){\line(0,1){8}}
\put(0,53){\line(0,1){6}}
\put(0,49){\line(0,1){2}}
\put(50,51){\line(0,1){5}}

\put(50,48.5){\line(0,1){1}}

\put(0,46.5){\line(0,1){1}}
\put(0,40){\line(0,1){5}}
\put(0,31){\line(0,1){7}}
\put(0,28.5){\line(0,1){1}}
\put(50,33){\line(0,1){14}}
\put(50,26.5){\line(0,1){1}}
\put(50,29){\line(0,1){2}}
\put(40,12){\line(1,0){9}}
\put(51,12){\line(1,0){2}}
\put(53,10){\line(0,1){2}}
\put(40,12){\line(0,1){3}}
\put(40,17){\line(0,1){1}}
\put(40,18){\line(1,0){2}}
\put(42,14){\line(0,1){4}}
\put(42,14){\line(1,0){7}}
\put(51,14){\line(1,0){7}}
\put(58,14){\line(0,1){6}}
\put(0,20){\line(1,0){58}}
\put(0,20){\line(0,1){7}}

\put(50,21){\line(0,1){4}}
\put(50,11){\line(0,1){4}}
\put(50,17){\line(0,1){2}}
\put(50,7){\line(0,1){2}}

\thinlines
\put(-2,60){\line(1,0){11}}
\put(1,57){\line(1,0){51}}
\put(10,60){\line(1,0){20}}
\put(30,58){\line(0,1){2}}
\put(-2,57){\line(1,0){1}}
\put(-2,57){\line(0,1){3}}
\put(1,57){\line(1,0){51}}
\put(1,54){\line(1,0){48}}
\put(-2,54){\line(1,0){1}}
\put(-2,52){\line(1,0){51}}
\put(-2,52){\line(0,1){2}}
\put(-2,50){\line(1,0){1}}
\put(1,50){\line(1,0){51}}
\put(-2,48){\line(1,0){54}}
\put(-2,48){\line(0,1){2}}
\put(52,50){\line(0,1){2}}
\put(52,54){\line(0,1){3}}
\put(51,52){\line(1,0){1}}
\put(51,54){\line(1,0){1}}
\put(-2,46){\line(1,0){12}}
\put(-2,44){\line(1,0){1}}
\put(-2,44){\line(0,1){2}}
\put(1,44){\line(1,0){7}}
\put(-2,41){\line(1,0){1}}
\put(1,41){\line(1,0){29}}
\put(-2,39){\line(1,0){51}}
\put(51,39){\line(1,0){1}}
\put(52,39){\line(0,1){9}}
\put(-2,39){\line(0,1){2}}
\put(30,41){\line(0,1){6}}
\put(10,42){\line(0,1){4}}
\put(8,42){\line(0,1){2}}
\put(10,36){\line(1,0){39}}
\put(8,34){\line(1,0){41}}
\put(51,36){\line(1,0){3}}
\put(51,34){\line(1,0){1}}
\put(10,36){\line(0,1){2}}
\put(8,34){\line(0,1){4}}
\put(1,32){\line(1,0){51}}
\put(52,32){\line(0,1){2}}
\put(-2,32){\line(1,0){1}}
\put(-2,30){\line(1,0){51}}
\put(51,30){\line(1,0){1}}
\put(-2,30){\line(0,1){2}}

\put(-2,28){\line(1,0){54}}
\put(-2,26){\line(1,0){1}}
\put(-2,26){\line(0,1){2}}
\put(1,26){\line(1,0){51}}
\put(52,24){\line(0,1){2}}
\put(-2,24){\line(1,0){1}}
\put(1,24){\line(1,0){48}}
\put(51,24){\line(1,0){1}}
\put(52,28){\line(0,1){2}}

\put(-2,16){\line(1,0){43}}
\put(-2,16){\line(0,1){8}}
\put(45,16){\line(1,0){9}}
\put(54,16){\line(0,1){3}}
\put(54,21){\line(0,1){15}}

\put(20,0){Figure 2}

\end{picture}

This link may be partitioned into two trivial links in three distinct ways,
giving three embeddings of the 3-manifold obtained by 0-framed surgery on $L$.
If the two sublinks are $L_a\cup{L_b}$ and $L_u\cup{L_v}$ 
then 
\[
A=vu^{-1}v^{-1}u^{-1}vuv^{-1}u,~B=vuv^{-1}u^{-1}v^{-1}uvu^{-1},
\]
\[
U=b^{-1}aba^{-1}\quad\mathrm{and}\quad{V=1}.
\] 
Hence $\pi_X\cong\Gamma_1$ and $\pi_Y\cong\mathbb{Z}^2$.

Each of the other partitions determine abelian embeddings, with
$\pi_X\cong\pi_Y\cong\mathbb{Z}^2$ and $\chi(X)=\chi(Y)=1$.

With a little more effort, instead of passing just one bight of $L_a$ 
under $L_b$ (as above), we may interlace the loops of $L_a$ and $L_b$ 
around each of $L_u$ and $L_v$ so that $u$ and $V$ represent $[a,[a,b]]$ 
and $[b,[a,b]]$, respectively, 
and so that each 2-component sublink of $L$ is still trivial.
If we then use claspers again we may arrange that $u$ 
represents $[a,v]$ and $v$ represents $[b,u]$,
so that we obtain a 3-manifold which has one embedding with $\pi_X\cong\pi_Y\cong\Gamma_1$ and another with
$\pi_X\cong\pi_Y\cong\mathbb{Z}^2$.
Can we refine this construction so that the third embedding has
$\pi_X\cong\Gamma_1$ and $\pi_Y\cong\mathbb{Z}^2$?

\smallskip
{\it Acknowledgment.} This work was begun at the BIRS conference on 
``Unifying Knot Theory in Dimension 4", 4-8 November 2019.



\begin{thebibliography}{99}

\bibitem{Ar81} Artamanov, V. A. Projective nonfree modules over group rings 
of solvable groups, Mat. Sbornik 116 (1981), 232--244.

\bibitem{FT95} Freedman, M. H. and Teichner, P. 4-Manifold topology I: 
subexponential groups,
 Inventiones Math. 122 (1995), 509--529.

\bibitem{Hi} Hillman, J. A. {\it Four-Manifolds, Geometries and Knots},

Geometry and Topology Monographs, vol. 5,

Geometry and Topology Publications (2002). (Revisions 2007 and 2014).

\bibitem{Hi17} Hillman, J. A. Complements of connected hypersurfaces in $S^4$,

Special volume in memory of Tim Cochran,

J. Knot Theory Ramif. 2602 (2017), 1740014. (20 pages)

\bibitem{Hi19} Hillman, J. A.  $3$-manifolds with abelian embeddings in $S^4$,

J. Knot Theory Ramif. 2901 (2020), 2050001 (22 pages).

\bibitem{Hi20} Hillman, J. A. Nilpotent groups with balanced presentations,



\bibitem{Li04} Lickorish, W.B.R. Splittings of $S^4$,

Bol. Soc. Mat. Mexicana 10 (2004), Special Issue, 305--312.

\bibitem{Lu83} Lubotzky, A. Group presentation, $p$-adic analytic groups 
and lattices in $SL_2(\mathbb{C})$,
Ann. Math. 118 (1983), 115--130.

\bibitem {Rol} Rolfsen, D. \textit{Knots and Links},

Mathematics Lecture Series 7, 
Publish or Perish, Inc., Berkeley (1976).

\end{thebibliography}
\end{document}